\begin{document}
\mainmatter

\title{A magnetic oriented approach to the systematic coupling of field and circuit equations}
\titlerunning{Magnetic oriented approach to field-circuit coupling}
\author{Herbert Egger\inst{1}
\and
Idoia Cortes Garcia\inst{2}
\and
Vsevolod Shashkov\inst{3} 
\and 
Michael Wiesheu\inst{4}
}
\institute{Johann Radon Institute for Computational and Applied Mathematics and Johannes Kepler University, Linz, Austria,
\email{herbert.egger@jku.at}
\and 
Department of Mechanical Engineering-Dynamics and Control, Eindhoven University of Technology, Eindhoven, The Netherlands,
\email{i.cortes.garcia@tue.nl}
\and  
Department of Mathematics, Technical University of Darmstadt, Germany,
\email{shashkov@mathematik.tu-darmstadt.de}
\and 
Institute for Accelerator Science and Electromagnetic Fields, Technical University of Darmstadt, Germany, 
\email{michael.wiesheu@tu-darmstadt.de}}

%
%
\maketitle

\begin{abstract}
A novel strategy is proposed for the coupling of field and circuit equations when modeling power devices in the low-frequency regime. The resulting systems of differential-algebraic equations have a particular geometric structure which explicitly encodes the energy storage, dissipation, and transfer mechanisms. This implies a power balance on the continuous level which can be preserved under appropriate discretization in space and time. The models and main results are presented in detail for linear constitutive models, but the extension to nonlinear elements and more general coupling mechanisms is possible. The theoretical findings are demonstrated by numerical results.
\end{abstract}

\section{Introduction}
\label{shashkov:sec:intro}

The design and optimization of electromagnetic devices requires the accurate, efficient, and reliable simulation of the dynamic behavior of magnetic fields in the components and the electric networks used for excitation or interconnection.
Electric machines and power transformers operate in the low frequency regime such that the equations of magneto-quasistatics are suitable as mathematical models~\cite{rodriguez2010eddy} for these devices.
Finite dimensional systems can be obtained by appropriate discretization \cite{kameari1990,meunier2008}.
Suitable models for the excitation through supplied voltages or currents have been considered in~\cite{dular1999circuit,hiptmair2005current}. 
Various methods are also available for modeling electric supply circuits, in particular, the modified nodal and loop analyses~\cite{guenther2005circuits}. 
Based on combinations of the mentioned approaches, the systematic coupling of field and circuit models has been investigated thoroughly; see e.g. \cite{tsukerman1992,degersem2004coupled,degersem2006transient,meunier2008coupling,demenko2010}. 
Further topics that have been addressed are the well-posedness and differential-algebraic index of field-circuit coupled systems~\cite{bartel2011structural,garcia2020structural} or the extension to nonlinear problems in frequency domain~\cite{escarela2010nonlinear,sabariego2018eddy}.

\medskip 
\noindent 
\textbf{Scope and main contributions.}
%
Instead of the conventional nodal or loop analysis, we consider the coupling of a vector potential formulation for the magneto-quasistatic field equations with the magnetic oriented nodal analysis, introduced recently in~\cite{shashkov2022mona}. 
It turns out that both models share the same geometric structure which is preserved when coupling them appropriately. 
Vice versa, the separate models for the magnetic device and the electric circuit can be obtained as special cases by reduction of the coupled system. 
As shown in \cite{egger2021energy}, the underlying structure of the model can  be preserved under appropriate discretization in space and time, such that higher-order power-preserving time discretization and reduced order models can be obtained in a systematic manner. 
Furthermore, compared to the conventional circuit models, the differential-algebraic index is reduced in most cases, which promotes numerical stability. 
In this paper, we introduce the relevant equations for field-circuit coupling, we comment on the underlying problem structure, and we derive the power balance for the resulting systems. For ease of presentation, we restrict ourselves to linear constitutive laws and to the coupling by stranded conductors. 
The generalization to nonlinear models and solid conductors is, however, possible; see~\cite{shashkov2022mona,shashkov2024} for a comprehensive discussion. 

\medskip 
\noindent 
\textbf{Outline.}
%
In Section~\ref{shashkov:sec:model} 
we introduce the mathematical models for magnetic devices and electric circuits, and we briefly discuss their similarities. 
The coupling strategy is introduced in Section~\ref{shashkov:sec:coupling}, where we state and prove the main result of this note and also discuss some possible extensions. 
In Section~\ref{shashkov:sec:num}, we report numerical tests for a typical benchmark problem, which serve for illustration of our theoretical results. 
Section~\ref{shashkov:sec:disc} summarizes our findings and highlights some topics of ongoing  research.

\section{Magnetic oriented modeling}
\label{shashkov:sec:model}

We start with introducing the independent models for the electric circuit and the magnetic device. As mentioned above, we restrict the presentation to linear constitutive equations. The extension to nonlinear models is discussed in \cite{shashkov2024}.

\subsubsection*{Magnetic device model.}

We are interested in low-frequency applications for which the eddy-current approximation of Maxwell's equations is appropriate~\cite{rodriguez2010eddy}. 
The quasistatic field distribution is described by the $A^*$-formulation \cite{kameari1990}, i.e.,
\begin{align} \label{shashkov:eq:1}
\sigma \partial_t \mathbf{a} + \operatorname{curl}(\nu \operatorname{curl} \mathbf{a}) &= \mathbf{j},
\end{align}
complemented by appropriate boundary and gauging conditions. The modified vector potential $\mathbf{a}$ is chosen here, such that $\mathbf{e} = -\partial_t \mathbf{a}$ and $\mathbf{b} = \operatorname{curl} \mathbf{a}$ are the electric field and magnetic flux density; this amounts to a specific gauge~\cite{kameari1990}.
Following \cite{dular1999circuit,schoeps2013winding}, we represent the current density $\mathbf{j} = \boldsymbol{\chi} i_M$ in the stranded conductors via appropriate winding functions $\boldsymbol{\chi}$ and a vector $i_M$ of driving currents supplied by the circuit. 
After discretization by finite elements~\cite{meunier2008}, one obtains the finite-dimensional system 
\begin{align} \label{shashkov:eq:2}
M_\sigma \partial_t a + K_\nu a &= X i_M.
\end{align}
The terminal voltages $v_M = X^\top \partial_t a$ can be interpreted as the output of the system.
Any solution of the above problem satisfies the power balance
\begin{align} \label{shashkov:eq:3}
\frac{d}{dt} H(a) 
&= -\|\partial_t a\|_{M_\sigma}^2 +  \langle i_M, v_M\rangle,
\end{align}
where $H(a) = \frac{1}{2} \|a\|_{K_\nu}^2$ denotes the energy stored in the system. 
For ease of notation, we abbreviate $\|x\|_M^2 := x^\top M x$ and denote by $\langle x, y \rangle = y^\top x$ the Euclidean scalar product.
The above power balance is a special case of the one proven in Theorem~\ref{shashkov:thm:1} below, and states that the system energy can only change due to eddy current losses and power supplied to or drawn from the system.

\subsubsection*{Electric circuit model.}
We consider a circuit consisting of resistors, capacitors, inductors, and voltage resp. current sources.
The interconnection is modeled by a finite directed graph. Using standard notation~\cite{bartel2011structural,guenther2005circuits}, we write $A_*$ for the reduced branch-to-node incidence matrices of the elements~$* \in \{R,C,L,V,I\}$. 
For our further analysis, we introduce magnetic node potentials~$\psi$, branch fluxes~$\phi_*$, and charges~$q_*$, such that 
\begin{align} \label{shashkov:eq:4}
u = \partial_t \psi, \qquad v_* = \partial_t \phi_* , 
\qquad \text{and} \qquad  
i_* = \partial_t q_*
\end{align}
are the vectors of electric node potentials, branch voltages, and branch currents of the conventional nodal analysis~\cite{guenther2005circuits}. 
The magnetic potentials and fluxes are further related by $\phi_* = A_*^\top \psi$, encoding Kirchoff's voltage law. 
We use linear constitutive models $i_R = G v_R$, $i_C = C \partial_t v_C$, and $\partial_t i_L = L^{-1} v_L$, and assume independent voltage and current sources. 
The physical behavior of the circuit under consideration is then described by the system 
\begin{alignat}{10}
A_R G A_R^\top \partial_t \psi + A_C \partial_t q_C + A_V \partial_t q_V + A_L L^{-1} A_L^\top \psi &= -A_I i_{src}\label{shashkov:eq:5}\\
-A_C^\top \partial_t \psi  + C^{-1} q_C &= 0 \label{shashkov:eq:6}\\
-A_V^\top \partial_t \psi  &= -v_{src}. \label{shashkov:eq:7}
\end{alignat}
The first line is Kirchoff's current law, the second represents the model for the capacitors, and the last encodes the effect of the voltage sources. 

Let us denote by $H(\psi,q_C) = \frac{1}{2} \|A_L^\top \psi\|_{L^{-1}}^2 + \frac{1}{2}\|q_C\|_{C^{-1}}^2$ the energy stored in inductors and capacitors. 
Then from the particular structure of the circuit equations, one can see that any solution satisfies the power balance 
\begin{align} \label{shashkov:eq:8}
\frac{d}{dt} H(\psi,q_C) 
&= -\|A_R^\top \partial_t \psi\|^2_{G}  
- \langle i_{src}, v_I\rangle
- \langle i_V, v_{src}\rangle.
\end{align}
For convenience of notation, we introduced additional symbols $v_I=A_I^\top \partial_t \psi$ and $i_V = \partial_t q_V$ for the terminal voltages and currents. The above statement is again  a special case of the power balance stated in Theorem~\ref{shashkov:thm:1} below.
\begin{remark}
For a circuit consisting of resistors, inductors, and current sources only, we obtain the reduced system 
\begin{align} \label{shashkov:eq:9}
A_R G A_R^\top \partial_t \psi + A_L L^{-1} A_L \psi &= - A_I i_{src}, 
\end{align}
which has the same structure as~\eqref{shashkov:eq:2}. Hence \eqref{shashkov:eq:9} can be interpreted as a reduced model for magneto-quasistatics; see \cite{kettunen2001fields,degersem2004coupled} for similar considerations.
\end{remark}

\section{Field-circuit coupling}
\label{shashkov:sec:coupling}

In order to integrate the magnetic device into the circuit, we simply express the driving currents in \eqref{shashkov:eq:2} by $i_M = \partial_t q_M$ and add the output voltages $v_M = X^\top \partial_t a$ as voltage sources to the circuit equations~\eqref{shashkov:eq:5}--\eqref{shashkov:eq:7}. The additional incidence matrix $A_M$ ise used to describe the interconnection of the magnetic device terminals within the circuit. 
This leads to the system
\begin{alignat}{10}
A_R G A_R^\top \partial_t \psi + A_C \partial_t q_C + A_V \partial_t q_V + A_M \partial_t q_M + A_L L^{-1} A_L^\top \psi &=-A_I i_{src}\label{shashkov:eq:10} \\
-A_C^\top \partial_t \psi + C^{-1} q_C &= 0 \label{shashkov:eq:11} \\
-A_V^\top \partial_t \psi  &= -v_{src} \label{shashkov:eq:12} \\
-A_M^\top \partial_t \psi + X^\top \partial_t a &= 0 \label{shashkov:eq:13} \\
M_\sigma \partial_t a  + K_\nu a - X \partial_t q_M &= 0. \label{shashkov:eq:14}
\end{alignat}
For clarity, all quantities to be determined were collected on the left hand side of the equations. The independent voltage and current sources $v_{src}$, $i_{src}$ appear as inputs on the right hand side of the problem. 
We are now in the position to state and prove the main result of this paper. 
\begin{theorem}[Power-balance of the coupled system] \label{shashkov:thm:1} $ $ \\
Let $(\psi,q_C,q_V,q_M,a)$ be a solution of \eqref{shashkov:eq:10}--\eqref{shashkov:eq:14} and set $v_I=A_I^\top \partial_t \psi$, $i_V = \partial_t q_V$. 
Further define $H(\psi,q_C,a)=\frac{1}{2}(\|A_L^\top \psi\|^2_{L^{-1}} + \|q_C\|_{C^{-1}}^2 + \|a\|_{K_{\nu}}^2)$. 
Then
\begin{align} \label{shashkov:eq:15}
\frac{d}{dt} H(\psi,q_C,a) 
= -\|A_R^\top \partial_t \psi\|_{G}^2
- \|\partial_t a\|_{M_\sigma}^2 
- \langle v_I, i_{src}\rangle
- \langle v_{src}, i_V\rangle.
\end{align}
The total energy stored in the system thus changes only by dissipation due to resistive losses and by power supplied to or drawn from the system.
\end{theorem}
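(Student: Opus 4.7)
The strategy is the standard energy argument adapted to this block-structured DAE: multiply each evolution equation by the time derivative of the natural co-variable, and then recognize each resulting quadratic form either as a time derivative of a stored-energy contribution, as a dissipation term, or as a coupling term that will cancel with a partner term from another equation.

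Concretely, I would first test the Kirchhoff current law \eqref{shashkov:eq:10} with $(\partial_t \psi)^\top$ from the left. The resistive block yields $\|A_R^\top \partial_t \psi\|_G^2$, the inductive block, together with symmetry of $L^{-1}$, gives the time derivative $\frac{d}{dt}\frac{1}{2}\|A_L^\top \psi\|_{L^{-1}}^2$, and the current source term becomes $-\langle v_I, i_{src}\rangle$ after using $v_I = A_I^\top \partial_t \psi$. The three remaining terms involve $A_C^\top \partial_t \psi$, $A_V^\top \partial_t \psi$, and $A_M^\top \partial_t \psi$; these are precisely the quantities pinned down by the constraint equations \eqref{shashkov:eq:11}, \eqref{shashkov:eq:12}, and \eqref{shashkov:eq:13}. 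Substituting these constraints rewrites the capacitor coupling as $\langle C^{-1} q_C, \partial_t q_C\rangle = \frac{d}{dt}\frac{1}{2}\|q_C\|_{C^{-1}}^2$ and the voltage source coupling as $\langle v_{src}, i_V\rangle$ via $i_V = \partial_t q_V$.

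Next I would test the field equation \eqref{shashkov:eq:14} with $(\partial_t a)^\top$. By symmetry of $M_\sigma$ and $K_\nu$ this yields the dissipation term $\|\partial_t a\|_{M_\sigma}^2$, the stored-energy derivative $\frac{d}{dt}\frac{1}{2}\|a\|_{K_\nu}^2$, and the coupling term $-\langle \partial_t a, X \partial_t q_M\rangle$. The key observation is that this is exactly the negative of the magnetic-device coupling term $\langle A_M^\top \partial_t \psi, \partial_t q_M\rangle = \langle X^\top \partial_t a, \partial_t q_M\rangle$ that appeared after applying \eqref{shashkov:eq:13} in the first computation. Adding the two tested equations therefore cancels the interconnection term and leaves exactly the identity \eqref{shashkov:eq:15}.

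The only point requiring care is the book-keeping of signs in the two cross-coupling pairs, namely $(A_M,-A_M^\top)$ between \eqref{shashkov:eq:10} and \eqref{shashkov:eq:13} and $(-X,X^\top)$ between \eqref{shashkov:eq:13} and \eqref{shashkov:eq:14}. These skew-symmetric block pairs encode the power-preserving interconnection, and their cancellation in the energy identity is the structural reason the theorem holds; no genuine analytical obstacle beyond this verification is expected, since symmetry of $M_\sigma$, $K_\nu$, $C^{-1}$ and $L^{-1}$ is built into the constitutive modeling.
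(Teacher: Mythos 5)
Your proposal is correct and follows essentially the same route as the paper's proof: test \eqref{shashkov:eq:10} with $\partial_t \psi$ and \eqref{shashkov:eq:14} with $\partial_t a$, use the algebraic constraints \eqref{shashkov:eq:11}--\eqref{shashkov:eq:13} to identify the capacitor energy rate, the source terms, and the interconnection term, and let the skew-symmetric coupling blocks cancel. The only cosmetic difference is that the paper starts from $\frac{d}{dt}H$ and substitutes the equations, whereas you start from the tested equations and recognize the energy derivatives; the computation is identical.
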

\begin{proof}
By formal differentiation of $H(\psi,q_c,a)$ with respect to time, we get
\begin{align*}
\frac{d}{dt} H&(\psi,q_C,a) 
= \langle A_L^\top L^{-1} A_L \psi, \partial_t \psi\rangle + \langle C^{-1}  q_C, \partial_t q_C\rangle + \langle K_\nu a, \partial_t a\rangle \\
 =&-\langle A_R G A_R^\top \partial_t \psi, \partial_t \psi \rangle - \langle A_C \partial_t q_C, \partial_t \psi \rangle - \langle A_V \partial_t q_V, \partial_t \psi \rangle - \langle A_M \partial_t q_M, \partial_t \psi \rangle \\
& 
- \langle A_I i_{src}, \partial_t \psi\rangle
+ \langle A_C^\top \partial_t \psi, \partial_t q_C\rangle 
 - \langle M_\sigma \partial_t a, \partial_t a \rangle + \langle X\partial_t q_M, \partial_t a \rangle. 
\end{align*}
The second identity was obtained by testing \eqref{shashkov:eq:10}, \eqref{shashkov:eq:11}, and \eqref{shashkov:eq:14} with $\partial_t \psi$, $\partial_t q_C$, and $\partial_t a$, respectively. 
Using that $\langle B x, y\rangle = \langle B^\top y, x \rangle$, one can further see that the second and sixth term on the right hand side cancel each other. 
The first and seventh term can be expressed as $-\|A_R^\top \partial_t \psi\|_G^2$ and $-\|\partial_t a\|_{M_\sigma}^2$, which already appear in the final power balance \eqref{shashkov:eq:15}. 
The fifth term can be written as
\begin{align*}
-\langle A_I i_{src}, \partial_t \psi\rangle
=-\langle A_I^\top \partial_t \psi , i_{src}\rangle
=-\langle v_I , i_{src}\rangle,
\end{align*}
which amounts to the first source term in \eqref{shashkov:eq:15}.
For the third term, we use 
\begin{align*}
-\langle A_V \partial_t q_V, \partial_t \psi \rangle
= -\langle A_V^\top \partial_t \psi, \partial_t q_V \rangle
= -\langle v_{src}, \partial_t q_V\rangle = -\langle v_{src}, i_V\rangle
\end{align*}
which follows by testing \eqref{shashkov:eq:12} with $\partial_t q_V$. 
This yields the second source term in~\eqref{shashkov:eq:15}.
The fourth term can finally be rewritten as 
\begin{align*}
-\langle A_M \partial_t q_M, \partial_t \psi \rangle
= -\langle A_M^\top \partial_t \psi, \partial_t q_M \rangle
= -\langle X^\top \partial_t a, \partial_t q_M\rangle
= -\langle X\partial_t q_M, \partial_t a\rangle,
\end{align*}
which follows by testing \eqref{shashkov:eq:13} with $\partial_t q_M$. 
Thus this term cancels out with the last term on the right hand side of the power balance above, 
which leads to the result of the theorem. 
\hfill \qed
\end{proof}

\begin{remark}
The power balances for the separate subsystems, e.g., the magnetic device and the electric circuit,  considered in the previous section are particular cases that follow by dropping some of the terms in the proof of Theorem~\ref{shashkov:thm:1}. 
\end{remark}

\subsubsection*{Abstract problem structure and consequences.}

For the following considerations, let us denote by $y = (\psi,q_C,q_V,q_M,a)$ the vector containing all state variables and by 
$H(y)=H(\psi,q_C,a)$ the associated energy functional. 
The coupled system \eqref{shashkov:eq:10}--\eqref{shashkov:eq:14} can then be written in compact form
\begin{align}
 C(\partial_t y) 
+ \partial_y H(y) &= f(y), 
\end{align}
with source vector $f(y)=(0,0,-v_{src},0,0)$ and $\partial_y H(y)$ denoting the gradient of the energy functional.
The term $C(\partial_t y)$ represents the dissipation and coupling between the individual components, and is monotone, i.e., $\langle C(\partial_t y),\partial_t y\rangle \ge 0$. 
The power balance of the previous theorem can then be stated compactly as 
\begin{align}
\frac{d}{dt} H(y) 
&= \langle \partial_y H(y), \partial_t y\rangle 
= -\langle C(\partial_t y), \partial_t y\rangle + \langle f(y), \partial_t y\rangle 
\end{align}
with $\langle C(\partial_t y), \partial_t y\rangle$ and $\langle f(y), \partial_t y\rangle$ describing the dissipated and the supplied power, respectively. 
This compact form of the problem clearly reveals the underlying geometric structure of the formulation which offers the possibility for various extensions and corollaries, namely:
\begin{itemize}
\item the considerations of nonlinear energy-based constitutive models;
\item the power-preserving time-discretization through Petrov-Galerkin methods;
\item the systematic model order reduction by Galerkin projection.
\end{itemize}
Details and proofs for corresponding generalizations can be found in \cite{egger2021energy,shashkov2022mona,shashkov2024}.

\section{Numerical results}\label{shashkov:sec:num}

For an illustration of our theoretical findings, we consider a simple model for a full wave rectifier. 
The electric circuit consists of a voltage source~(V), a transformer~(M), four diodes (D$_i$), and a load resistance~(R); see Figure~\ref{shashkov:fig:rectifier} for a schematic sketch of the geometric setup.
\begin{figure}[ht!]
\includegraphics[scale= 1.06]{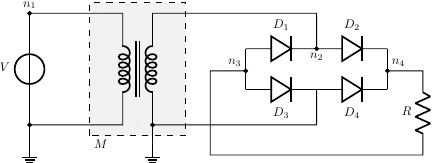}
\hfill
\includegraphics[scale= 0.9]{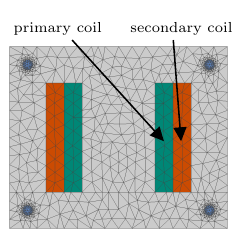}
\caption{Schematic sketch of a full wave rectifier circuit (left) and geometry of transformer modeled by the field equations (right). Based on \cite[Section~6.2]{schops2011multiscale}.}
\label{shashkov:fig:rectifier}
\end{figure}
The diodes are modeled as nonlinear resistors with given voltage-current relation $i_D = I_s(\exp(v_D/V_{th}) -1) + v_D/R_{par}$, where $I_s = 10^{-14}\, \unit{\ampere}$, $V_{th} = 2.5\cdot 10^{-2} \, \unit{\volt}$, and $R_{par} = 10^{12} \, \unit{\ohm}$. 
The load resistance is set to $R=10 \, \unit{\ohm}$ and the source voltage is specified as $v_{src}(t) = 160\sin(120\pi t) \, \unit{\volt}$. 
The transformer is described by a linear eddy current model taken from the FEMM example in \cite{meeker2020finite}.
We note that the extension of Theorem~\ref{shashkov:thm:1} and its proof to the problem under consideration, including nonlinear resistors, is possible by minor modifications of the arguments. 
%

\subsubsection*{Simulation setup.}
The space discretization of the transformer is done by piecewise linear finite elements in two dimensions~\cite{meunier2008}. 
For the time discretization of the system, we employ the midpoint rule with uniform step size $\tau>0$. 
In compact notation, the $n$th time step of the resulting scheme has the form
\begin{align*}
C(d_\tau y^{n-1/2}) 
+ \partial_y  H(y^{n-1/2}) = f(t^{n-1/2}) \qquad n \ge 0,
\end{align*}
with $d_\tau y^{n-1/2} = \frac{1}{\tau}(y^{n}-y^{n-1})$, $y^{n-1/2}=\frac{1}{2}(y^{n}+y^{n-1})$, and $t^{n-1/2} = \frac{1}{2}(t^{n} + t^{n-1})$.  
The nonlinear systems in every time-step are solved by the Newton method with a tolerance of $10^{-12}$. 
Finally, the initial values of all variables are chosen zero, which is consistent with the algebraic constraints of the system. 
%

\subsubsection*{Discrete power balance.}
For our test problem, the transformer is the only energy storing element. Since a linear material model is used for this device, the energy of the system is a quadratic function of the state variables.
As a consequence, a discrete power balance can be established for our scheme, namely
\begin{align*}
    d_\tau H^{n-1/2}  &= 
    - \|d_\tau a^{n-1/2}\|^2_{M_\sigma} -  d(A_R^\top d_\tau \psi^{n-1/2}) - \langle v_{src}(t^{n-1/2}), i_V^{n-1/2}\rangle.
\end{align*}
Here $H^n = \tfrac{1}{2}\|a^n\|^2_{K_\nu}$ is the stored energy at time $t^n$,  $\|d_\tau a^{n-1/2}\|^2_{M_\sigma}$ are the eddy current losses, $d(A_R^\top d_\tau \psi^{n-1/2}) \ge 0$ represents the dissipation in load resistor and diodes, and $i_V^{n-1/2} = d_\tau q_V^{n-1/2}$ denotes the current across the voltage source. 

\subsubsection*{Numerical results.}
In Figure~\ref{shashkov:fig:results}, we display the input voltage $v_{src}$ and the rectified output voltage  $v_{R}$ as a functions of time. In addition, we list the absolute maximum discretization errors ($\epsilon_\tau$), the observed convergence rates (e.o.c.), and the maximum discrepancy in the discrete power balance ($\epsilon_H$). 
\begin{figure}[ht]
\begin{minipage}{.55\textwidth}
\includegraphics[scale=0.8]{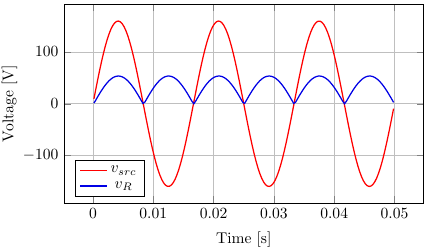}
\end{minipage}
\hspace*{-2.5em}
\begin{tabular}{c||c|c||c}
 $\tau$  \, &  \, $\epsilon_\tau $ \,   &  e.o.c. & \, $\epsilon_H$ \,  \\
\hline
0.005000 & 0.0266 & --   &  0.0853 $\cdot 10^{-12}$ \\
0.002500 & 0.0078 & 1.77 &  0.1137 $\cdot 10^{-12}$\\
0.001250 & 0.0021 & 1.89 &  0.1208 $\cdot 10^{-12}$\\
0.000625 & 0.0006 & 1.79 &  0.0782 $\cdot 10^{-12}$
\end{tabular}
\caption{Left: input voltage $v_{src}$ and rectified output voltage $v_R$. Right: convergence history for the magnetic potential $\psi_4$ and maximal discrepancy in the energy balance.}
\label{shashkov:fig:results}
\end{figure}
%
%
The computational results clearly illustrate the desired physical behavior. 
As expected for the mid-point rule, we observe second order convergence in the primary unknowns. Moreover, the discrete power balance is satisfied at each time-step with the same level of accuracy as used for the solution of the nonlinear systems. 
%

%
%

\section{Discussion} \label{shashkov:sec:disc}

A novel magnetic-oriented approach to the coupling of electric circuits and magnetic devices was proposed. The underlying power balance of the system is encoded in the particular geometric structure of the resulting equations, which can be preserved by appropriate discretization strategies. 
The approach was presented for the case of linear device models but can be extended systematically to the nonlinear case. 
Details about these generalizations can be found in \cite{shashkov2024}. 
Based on the considerations in \cite{shashkov2022mona}, we expect that the differential-algebraic index of the coupled system can be shown to be $\nu \le 1$. A proof of this statement is left for future research. 

\subsubsection*{Acknowledgments.}
The authors are greatful for financial support by the German Science Foundation (DFG) and the Austrian Science Fund (FWF) via grants TRR~361 and SFB~F90-N, subprojects C01 and C02.


\end{document}